\algrenewcommand\algorithmicrequire{\textbf{Input:}}
\algrenewcommand\algorithmicensure{\textbf{Output:}}
\newcounter{tempEquationCounter}
\newcounter{thisEquationNumber}
\newcolumntype{C}[1]{>{\centering}m{#1}}
\newtheorem{theorem}{Theorem}
\newtheorem{remark}{Remark}
\title{\LARGE \bf
Scalable analysis of linear networked systems via chordal decomposition
}
\author{Yang Zheng$^{\dagger}$, Maryam Kamgarpour$^{\ddagger}$, Aivar Sootla$^{\dagger}$ and Antonis Papachristodoulou$^{\dagger}$
\thanks{The work of Y. Zheng is supported in part by the Clarendon Scholarship, and in part by the Jason Hu Scholarship. The work of M. Kamgarpour is supported by ERC Starting Grant CONENE.}
\thanks{$^{\dagger} $Y. Zheng, A. Sootla and A. Papachristodoulou are with Department of Engineering Science at the University of Oxford. (E-mail:        \{yang.zheng, aivar.sootla, antonis\}@eng.ox.ac.uk)}%
\thanks{$^{\ddagger}$M. Kamgarpour is with Department of Electrical Engineering and Information Technology at ETH Zurich. (E-mail:        {mkamgar}@control.ee.ethz.ch)}%
}
\begin{document}

\maketitle
\thispagestyle{empty}
\pagestyle{empty}

\begin{abstract}
    This paper introduces a chordal decomposition approach for scalable analysis of linear networked systems, including stability, $\mathcal{H}_2$ and $\mathcal{H}_{\infty}$ performance. Our main strategy is to exploit any sparsity within these analysis problems and use chordal decomposition. We first show that Grone's and Agler's theorems can be generalized to block matrices with any partition. This facilitates networked systems analysis, allowing one to solely focus on the physical connections of networked systems to exploit scalability. Then, by choosing Lyapunov functions with appropriate sparsity patterns, we decompose large positive semidefinite constraints in all of the analysis problems into multiple smaller ones depending on the maximal cliques of the system graph. This makes the solutions more computationally efficient via a recent first-order algorithm. Numerical experiments demonstrate the efficiency and scalability of the proposed method.
\end{abstract}

\section{Introduction}

Large-scale networked systems consisting of multiple subsystems over a network have received considerable attention~\cite{siljak2011decentralized}. This class of systems arises in many practical applications, such as the smart gird~\cite{dor2014sparsity} and automated traffic systems~\cite{zheng2017distributed}. One of the challenges arising in these systems is to develop scalable methods that are able to solve the associated analysis and synthesis problems efficiently. However, classical methods often suffer from lack of scalability to large-scale systems, since computational demand usually grows rapidly as the system's dimension increases.

In the literature, there are two groups of scalable analysis techniques for large-scale networked systems: 1) \emph{compositional analysis}~\cite{moylan1978stability,meissen2015compositional,anderson2012decomposition}; and 2) \emph{positive systems theory}~\cite{farina2011positive,rantzer2015scalable,tanaka2011bounded,sootla2012scalable,sootla2017block}. The former method is usually carried out in the framework of dissipative systems, while the latter method aims to solve a special type of dynamical systems. The main strategy of compositional analysis is to find individual supply rates for each dissipative subsystem and then to establish a global storage function as a combination of the local storage functions~\cite{moylan1978stability}. Recently, Meissen \emph{et al.} employed a first-order method to optimize the local supply rates for certifying stability of an interconnected system~\cite{meissen2015compositional}, which might reduce the conservatism brought by individual storage functions. Anderson and Papachristodoulou proposed a decomposition technique based on graph partitioning that facilitates the compositional analysis~\cite{anderson2012decomposition}.  Another group of scalable strategies focuses on a particular class of systems, \emph{i.e.}, positive systems~\cite{farina2011positive}, where the system matrices only have nonnegative off-diagonal entries. It is well-known that stability and performance of positive systems can be verified using linear Lyapunov functions~\cite{rantzer2015scalable}, which can be computed by more scalable linear programs (LPs) instead of traditional semidefinite programs (SDPs). Tanaka and Langbort showed that it is necessary and sufficient to use a diagonal Lyapunov function in the KYP lemma for positive systems~\cite{tanaka2011bounded}. Sootla and Rantzer proposed scalable model reduction techniques for positive systems using linear energy functions~\cite{sootla2012scalable}.

In contrast to the compositional analysis and positive system theory, our approach focuses on the inherent structure and sparsity of networked systems and uses sparse optimization techniques, particularly chordal decomposition, to solve the analysis problems efficiently. This idea is in line with some of the early results in the field~\cite{rantzer2010distributed, mason2014chordal,andersen2014robust, ZMP2018Scalable}. Chordal decomposition is a celebrated result in linear algebra that connects sparse positive semidefinite matrices and chordal graphs. There is a broad literature regarding the applications of chordal graph properties in combinatorial problems, Cholesky factorization, matrix completion and sparse semidefinite optimization; see~\cite{vandenberghe2014chordal} for a comprehensive review.

In this paper, we introduce a chordal decomposition approach for scalable analysis of linear networked systems. We focus on the well-known convex formulations of the analysis problems, \emph{i.e.}, stability, $\mathcal{H}_2$ and $\mathcal{H}_{\infty}$ performance, and show how to decompose large positive semidefinite constraints in all of the analysis problems into multiple smaller ones, thus facilitating their solutions. Specifically, compared to~\cite{rantzer2010distributed, mason2014chordal,andersen2014robust, ZMP2018Scalable}, the contributions of this paper are: 1) we generalize Grone's and Agler's theorems to block matrices with arbitrary partition, and utilize the generalization for networked systems analysis; 2) we extend the scope of stability verification in~\cite{mason2014chordal} to $\mathcal{H}_2$ and $\mathcal{H}_{\infty}$ analyses of networked systems. Our approach can potentially be applied to other analysis and synthesis problems, such as structured model reduction and stabilizing feedback design.

The rest of this paper is organized as follows. In Section~\ref{Section:Preliminaries}, we present the problem formulation. Chordal decomposition in sparse SDPs is introduced in Section~\ref{Section:ChordalDecomposition}. Section~\ref{Section:ScalableAnalysis} presents the scalable analysis approach for stability, $\mathcal{H}_2$ and $\mathcal{H}_{\infty}$ performance. Numerical results are shown in Section~\ref{Section:Simulation}, and we conclude the paper in Section~\ref{Section:Conclusion}.
\vspace{-1mm}

\section{Preliminaries and Problem Statement} \label{Section:Preliminaries}
\vspace{-1mm}
\subsection{Preliminaries on graph-theoretic notions}
\vspace{-1mm}
A directed graph is denoted by $\mathcal{G}(\mathcal{V},\mathcal{E})$ and it consists of a set of nodes $\mathcal{V} = \{1,2, \ldots, n\}$ and a set of edges $\mathcal{E} \subseteq \mathcal{V} \times \mathcal{V}$. Graph $\mathcal{G}(\mathcal{V},\mathcal{E})$ is called undirected if $(u,v) \in \mathcal{E} \Leftrightarrow (v,u) \in \mathcal{E}$. A graph is \emph{complete} if there exists an edge between any pair of nodes. A clique is a subset of nodes $\mathcal{C} \subseteq \mathcal{V}$ that induces a complete subgraph. If $\mathcal{C}$ is not contained by any other clique, then it is referred to as a \emph{maximal clique}. A \emph{cycle} of length $k$ is a sequence $\{v_1, v_2, \ldots, v_k\} \subseteq \mathcal{V}$ with $(v_k, v_{1}) \in \mathcal{E}$ and $(v_i, v_{i+1}) \in \mathcal{E}, \forall i = 1, \ldots, k-1$. A \emph{chord} in a cycle $\{v_1, v_2, \ldots, v_k\}$ is an edge $(v_i,v_j)$ that connects two nonconsecutive nodes in the cycle.

An undirected graph is called \emph{chordal} if every cycle of length greater than three has a chord. Examples of chordal graphs include complete graphs and acyclic undirected graphs. Given a chordal graph, there are very efficient algorithms to find maximal cliques ~\cite{tarjan1984simple}. Non-chordal graphs $\mathcal{G}(\mathcal{V},\mathcal{E})$ can always be extended to a chordal graph $\hat{\mathcal{G}}(\mathcal{V},\hat{\mathcal{E}})$ by adding edges to $\mathcal{E}$, \emph{i.e.}, $\mathcal{E} \subset \hat{\mathcal{E}}$, and this process is called \emph{chordal extension}. There are several efficient heuristics to generate a good extension~\cite{vandenberghe2014chordal}. For example, the graph in Fig.~\ref{F:ChordalGraph}~(a) is non-chordal, and it can be chordally extended to that in Fig.~\ref{F:ChordalGraph}~(b) by adding an undirected edge $(3,4)$.  The graph in Fig.~\ref{F:ChordalGraph}~(b) has three maximal cliques: $\mathcal{C}_1 = \{1,3,4\}$, $\mathcal{C}_2 = \{2,3,5\}$ and $\mathcal{C}_3 = \{3,4,5\}$.


\begin{figure}
    \centering
    \setlength{\abovecaptionskip}{0em}
    \setlength{\belowcaptionskip}{-5pt}
    \begin{tikzpicture}
    \small
	  \matrix (m) [matrix of nodes,
	  		       row sep = 1.em,	
	  		       column sep = 1.4em,	
  			       nodes={circle, draw=black}] at (-2,0)
  		{ & 1 & \\ 2 &3 & 4 \\&5&\\};
		\draw (m-1-2) -- (m-2-2);
		\draw (m-1-2) -- (m-2-3);
		\draw (m-2-1) -- (m-2-2);
		\draw (m-2-1) -- (m-3-2);
		\draw (m-2-2) -- (m-3-2);
        \draw (m-2-3) -- (m-3-2);
		\node at (-2,-1.5) {(a)};
		\matrix (m2) [matrix of nodes,
	  		       row sep = 1.em,	
	  		       column sep = 1.4em,	
  			       nodes={circle, draw=black}] at (2,0)
        { & 1 & \\ 2 &3 & 4 \\&5&\\};
		\draw (m2-1-2) -- (m2-2-2);
		\draw (m2-1-2) -- (m2-2-3);
		\draw (m2-2-1) -- (m2-2-2);
		\draw (m2-2-1) -- (m2-3-2);
		\draw (m2-2-2) -- (m2-3-2);
        \draw (m2-2-2) -- (m2-2-3);
        \draw (m2-2-3) -- (m2-3-2);
		\node at (2,-1.5) {(b)};
	\end{tikzpicture}
    \caption{(a) Nonchordal graph: the cycle (1-3-5-4) is of length four but with no chords. (b) Chordal graph by adding an undirected edge $(3,4)$: all cycles of length greater than three have a chord.}
    \label{F:ChordalGraph}
\end{figure}
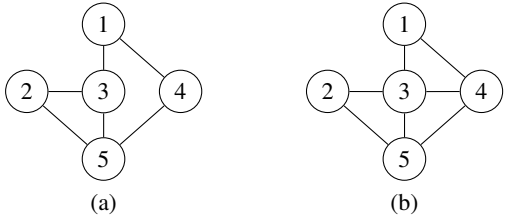

\subsection{Sparse block matrices}
A matrix $M \in \mathbb{R}^{N \times N}$ has \emph{$\alpha = \{\alpha_1, \alpha_2, \ldots, \alpha_n\}$-partitioning} with $N = \sum_{k=1}^n\alpha_i$ if
$$
    M = \begin{bmatrix} M_{11} & M_{12} & \ldots & M_{1n} \\
                        M_{12} & M_{22} & \ldots & M_{2n} \\
                        \vdots & \vdots & \ddots & \vdots \\
                        M_{n1} & M_{n2} & \ldots & M_{nn} \\  \end{bmatrix}
$$
where $M_{ij} \in \mathbb{R}^{\alpha_i \times \alpha_j}, i,j = 1, \ldots, n$. We describe the sparsity pattern of $\alpha$-partitioned $M$ by a graph $\mathcal{G}(\mathcal{V},\mathcal{E})$:
\begin{equation} \label{E:SparseMatrix}
    \mathbb{R}^{N\times N}_{\alpha}(\mathcal{E},0) = \{M \in \mathbb{R}^{N \times N} | M_{ij} = 0\; \text{if} \; (j,i) \notin \mathcal{E}^* \},
\end{equation}
where $\mathcal{E}^* = \mathcal{E} \cup \{(i,i), \forall i \in \mathcal{V}\}$. If $\mathcal{G}$ is undirected, we define the space of sparse symmetric block matrices as
\begin{equation} \label{E:SparseSymMatrix}
    \mathbb{S}^N_{\alpha}(\mathcal{E},0) = \{M \in \mathbb{S}^{N} | M_{ij} = M_{ji}^T= 0\; \text{if} \; (j,i) \notin \mathcal{E}^* \},
\end{equation}
and the cone of sparse block positive semidefinite (PSD) matrices as
$
    \mathbb{S}^N_{\alpha,+}(\mathcal{E},0) = \{M \in \mathbb{S}^N_{\alpha}(\mathcal{E},0) \mid M \succeq 0\}.
$
 Also, we define a cone $\mathbb{S}^N_{\alpha,+}(\mathcal{E},?)$ as the set of matrices in $\mathbb{S}^N_{\alpha}(\mathcal{E},0)$ that have a positive semidefinite completion, \emph{i.e.},
$
    \mathbb{S}^N_{\alpha,+}(\mathcal{E},?) = \mathbb{P}_{\mathbb{S}^N_{\alpha}(\mathcal{E},0)}(\mathbb{S}^N_+),
$ 
 where $\mathbb{P}$ denotes the projection onto the space of sparse matrices. 
\begin{remark} \label{R:BlockSparseMatrix}
    The definitions~\eqref{E:SparseMatrix} and~\eqref{E:SparseSymMatrix} also allow the block entry $M_{ij} = 0$ if $(j,i) \in \mathcal{E}^* $. Then we have $M \in \mathbb{S}^N_{\alpha}(\hat{\mathcal{E}},0)$ if $M \in \mathbb{S}^N_{\alpha}(\mathcal{E},0)$ and $\hat{\mathcal{E}}$ is a chordal extension of $\mathcal{E}$. This fact will be used in Section~\ref{Section:FOM}. The notation above is a natural extension of sparse scalar matrices to sparse block matrices with $\alpha$ partition. If $\alpha = \{1,1,\ldots,1\}$, then the notations are reduced to the normal cases, as used in~\cite{fukuda2001exploiting,vandenberghe2014chordal,ZFPGW2017chordal}.
\end{remark}

\subsection{Problem statement: analysis of linear networked systems}

We consider a network of linear heterogeneous subsystems interacting over a directed graph $\mathcal{G}(\mathcal{V},\mathcal{E})$. Each node in $\mathcal{V}$ represents a subsystem, and the edge $(i,j) \in \mathcal{E}$ means that subsystem $i$ exerts dynamical influence on subsystem $j$. The dynamics of subsystem $i \in \mathcal{V}$ are written as
\begin{equation} \label{E:SubsystemDynamics}
    \begin{aligned}
        \dot{x}_i(t) &= A_{ii}x_i(t) + \sum_{j \in \mathbb{N}_{i}}A_{ij}x_j(t) + B_i w_i(t), \\
        y_i(t) &= C_i x_i(t) + D_iw_i(t),
            \end{aligned}
\end{equation}
where $x_i \in \mathbb{R}^{\alpha_i}, y_i \in \mathbb{R}^{d_i}, w_i \in \mathbb{R}^{m_i}$ represent the local state, output and disturbance, respectively, and $\mathbb{N}_{i}$ denotes the neighbours of node $i$. By collecting the subsystems' states, the overall state-space model is then written concisely as
\begin{equation} \label{E:OverallDyanmics}
    \begin{aligned}
        \dot{x}(t) &= Ax(t) + B w(t), \\
        y(t) &= C x(t) + Dw(t),
            \end{aligned}
\end{equation}
where $x = [x_1^T, x_2^T, \ldots, x_n^T]^T$, and the vectors $y, w$ are defined similarly. The matrix $A$ is composed by blocks $A_{ij}$, and enjoys a block sparsity pattern $A \in \mathbb{R}_{\alpha}^{N\times N}(\mathcal{E},0)$. The matrices $B,C,D$ have block-diagonal structures.

In this paper, we develop scalable methods for three analysis problems~\cite{dullerud2013course} of the linear networked system~\eqref{E:OverallDyanmics}:

\emph{1) Stability analysis:} System~\eqref{E:OverallDyanmics} with $w = 0$ is asymptotically stable if and only if the Lyapunov linear matrix inequality (LMI) is feasible
\begin{equation} \label{E:StabilityCentralized}
    \begin{aligned}
    \text{find} \quad & P \succ 0 \\
        \text{subject to}\quad & A^TP+PA \prec 0. \\
    \end{aligned}
\end{equation}

\emph{2) $\mathcal{H}_2$ performance:} The $\mathcal{H}_2$ performance of a stable system~\eqref{E:OverallDyanmics} with $D=0$ can be computed as
\begin{equation}\label{E:H2Centralized}
    \begin{aligned}
    \min_{P} \quad & \text{Tr}(B^TPB) \\
        \text{subject to}\quad & A^TP+PA + C^TC \prec 0, \\
        & P \succ 0,
    \end{aligned}
\end{equation}
where $\|C(sI-A)^{-1}B\|_{\mathcal{H}_2} = \sqrt{\text{Tr}(B^TPB)}$.

\emph{3) $\mathcal{H}_{\infty}$ performance:} The $\mathcal{H}_{\infty}$ performance of a stable system~\eqref{E:OverallDyanmics} can be computed as
\begin{equation}\label{E:HinfCentralized}
    \begin{aligned}
    \min_{P} \quad & \gamma \\
        \text{subject to}\quad & \begin{bmatrix} A^TP+PA & PB & C^T \\B^TP & -\gamma I & D^T \\ C & D& -\gamma I \end{bmatrix} \prec 0, \\
        & P \succ 0,
    \end{aligned}
\end{equation}
where $\|C(sI-A)^{-1}B + D\|_{\mathcal{H}_{\infty}} = \gamma$.

Problems~\eqref{E:StabilityCentralized}-\eqref{E:HinfCentralized} are convex, and ready to be solved via existing interior-point solvers, such as SeDuMi~\cite{sturm1999using}. The main difficulty is that standard interior-point solvers suffer from scalability for large-scale problem instances. One major reason is that the constraints in~\eqref{E:StabilityCentralized}-\eqref{E:HinfCentralized} are imposed on the global system and consequently the computational complexity grows very quickly as the number of subsystems increases. Typically, the system graph $\mathcal{G}$ is sparse for practical large-scale systems, meaning that each subsystem only has physical connections with a few other subsystems. In this paper, we aim to exploit this sparsity in the algorithmic level to solve~\eqref{E:StabilityCentralized}-\eqref{E:HinfCentralized} efficiently. 

\begin{remark}
    Note that there are also other efficient formulations to test stability, and compute $\mathcal{H}_2$ and $\mathcal{H}_{\infty}$ performance~\cite{dullerud2013course}. An additional benefit of problems~\eqref{E:StabilityCentralized}-\eqref{E:HinfCentralized} is that we obtain an appropriate Lyapunov function $V(x) = x^TPx$. Also, problems~\eqref{E:StabilityCentralized}-\eqref{E:HinfCentralized} are helpful for some synthesis problems via a standard change of variables. In this paper, we will focus on~\eqref{E:StabilityCentralized}-\eqref{E:HinfCentralized}, and introduce a scalable approach to solve them efficiently when the system graph $\mathcal{G}$ is sparse.
\end{remark}

\section{Chordal Decomposition in Sparse SDPs} \label{Section:ChordalDecomposition}

In this section, we focus on the SDP formulations of the optimization problems~\eqref{E:StabilityCentralized}-\eqref{E:HinfCentralized} and explain how to decompose them using chordal graph theory. The standard \emph{primal form} of an SDP is
\begin{equation} \label{E:SDPprimal}
    \begin{aligned}
    \min_{X} \quad & \langle A_0,X \rangle \\
    \text{subject to} \quad & \langle A_i,X \rangle = b_i, i = 1, \ldots, m, \\
    & X \succeq 0,
    \end{aligned}
\end{equation}
and its standard \emph{dual form} is
\begin{equation} \label{E:SDPdual}
    \begin{aligned}
    \max_{y,Z} \quad & \langle b,y \rangle \\
    \text{subject to} \quad & Z+ \sum_{i=1}^m y_iA_i = A_0, \\
    & Z \succeq 0,
    \end{aligned}
\end{equation}
where $X$ is the primal variable, $y, Z$ are the dual variables, and $b \in \mathbb{R}^m, A_i \in \mathbb{S}^N, i = 0,1, \ldots, m$ are problem data.

\subsection{Chordal decomposition of sparse block PSD matrices}

Here, we introduce two key theorems that decompose $\mathbb{S}^N_{\alpha,+}(\mathcal{E},?)$ and $\mathbb{S}^N_{\alpha,+}(\mathcal{E},0)$ into a set of smaller and coupled cones, respectively. Given a partition $\alpha = \{\alpha_1,\alpha_2, \ldots, \alpha_n\}$ and a clique $\mathcal{C}_k$ of $\mathcal{G}$, we define an \emph{index matrix} $E_{\mathcal{C}_k} \in \mathbb{R}^{|\mathcal{C}_k| \times N}$ with $|\mathcal{C}_k| = \sum_{i \in \mathcal{C}_k}\alpha_i$ and $N = \sum_{i = 1}^n \alpha_i$ as
$$
    (E_{\mathcal{C}_k})_{ij} = \begin{cases} I_{\alpha_i}, \quad \text{if } \mathcal{C}_k(i) = j \\ 0, \qquad \text{otherwise} \end{cases}
$$
where $\mathcal{C}_k(i)$ denotes the $i$-th node in $\mathcal{C}_k$, sorted in the natural ordering. Given a block matrix $X \in \mathbb{S}^N$ with $\alpha$-partition, $E_{\mathcal{C}_k}XE_{\mathcal{C}_k}^T \in \mathbb{S}^{|\mathcal{C}_k|}$ extracts a principal submatrix defined by the clique $\mathcal{C}_k$, and the operation $E_{\mathcal{C}_k}^TYE_{\mathcal{C}_k}$ \emph{inflates} a $\vert\mathcal{C}_k\vert \times \vert \mathcal{C}_k\vert $ matrix into a sparse $N\times N$ matrix. Then, we have:
\begin{theorem} [Generalized Grone's theorem]\label{T:ChordalCompletionTheorem}
     Let $\mathcal{G}(\mathcal{V},\mathcal{E})$ be a chordal graph with a set of maximal cliques $\{\mathcal{C}_1,\mathcal{C}_2, \ldots, \mathcal{C}_p\}$. Given a partition $\alpha= \{\alpha_1,\alpha_2,\ldots,\alpha_n\}$ and $N = \sum_{i=1}^n \alpha_i$, then, $X\in\mathbb{S}^N_{\alpha,+}(\mathcal{E},?)$ if and only if
\end{theorem}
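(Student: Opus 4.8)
The plan is to reduce this block statement to the classical (scalar) Grone's theorem by ``blowing up'' the block graph into an equivalent scalar sparsity pattern. First I would construct the \emph{scalar expansion} $\mathcal{G}'(\mathcal{V}',\mathcal{E}')$ of $\mathcal{G}$ on $\mathcal{V}' = \{1,\ldots,N\}$: each node $i \in \mathcal{V}$ is replaced by the contiguous set of $\alpha_i$ scalar vertices indexing the rows and columns of the $i$-th block, these $\alpha_i$ vertices are made pairwise adjacent, and for each edge $(i,j)\in\mathcal{E}$ every scalar vertex of block $i$ is joined to every scalar vertex of block $j$. By construction the scalar pattern $\mathbb{S}^N(\mathcal{E}',0)$ coincides with the block pattern $\mathbb{S}^N_{\alpha}(\mathcal{E},0)$, so $X\in\mathbb{S}^N_{\alpha,+}(\mathcal{E},?)$ precisely means that $X$, viewed as an ordinary sparse matrix with pattern $\mathcal{G}'$, admits a PSD completion. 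The necessity direction is then immediate and needs no chordality: if $\hat{X}\succeq 0$ completes $X$, then $E_{\mathcal{C}_k}\hat{X}E_{\mathcal{C}_k}^T\succeq 0$, and since every block indexed inside a clique is specified (a clique is complete, so $(i,j)\in\mathcal{E}^*$ for all $i,j\in\mathcal{C}_k$), this principal submatrix equals $E_{\mathcal{C}_k}XE_{\mathcal{C}_k}^T$.

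For the substantive (sufficiency) direction I would establish two structural facts about the expansion. (i) $\mathcal{G}'$ is chordal whenever $\mathcal{G}$ is: take a perfect elimination ordering $v_1,\ldots,v_n$ of $\mathcal{G}$ and lift it to the ordering of $\mathcal{V}'$ that lists all scalar vertices of $v_1$, then those of $v_2$, and so on. Eliminating a scalar vertex in block $v_k$ leaves as its later neighbours the remaining scalar vertices of $v_k$ together with the scalar vertices of all later block-neighbours of $v_k$; the former are mutually adjacent because the block is complete, the latter are mutually adjacent because the corresponding blocks form a clique in $\mathcal{G}$ by the elimination property, and the two groups are adjacent because each later block-neighbour is joined completely to $v_k$. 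Hence the lifted ordering is a perfect elimination ordering, so $\mathcal{G}'$ is chordal. (ii) The maximal cliques of $\mathcal{G}'$ are exactly the scalar expansions of $\mathcal{C}_1,\ldots,\mathcal{C}_p$: since any two scalar vertices of one block are adjacent, a maximal clique must contain a block entirely as soon as it meets it, the set of blocks it meets is a clique of $\mathcal{G}$, and maximality forces that block-set to be a maximal clique of $\mathcal{G}$.

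With (i) and (ii) in place, the classical Grone's theorem applied to the scalar pattern $\mathcal{G}'$ asserts that $X$ has a PSD completion if and only if every maximal-clique principal submatrix is PSD. By (ii) these submatrices are exactly the block extractions $E_{\mathcal{C}_k}XE_{\mathcal{C}_k}^T$, so the scalar criterion reads $E_{\mathcal{C}_k}XE_{\mathcal{C}_k}^T\succeq 0$ for all $k=1,\ldots,p$, which is the claimed equivalence. No new completion analysis is needed: the whole argument rests on the scalar theorem together with the graph-expansion dictionary.

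I expect the main obstacle to lie in step (i), namely verifying rigorously that the lifted vertex ordering is a genuine perfect elimination ordering --- in particular that the internal completeness of each block meshes correctly with the clique property inherited from $\mathcal{G}$ across the three cases above. The maximal-clique correspondence in (ii) is intuitively transparent but still requires a careful maximality argument to exclude cliques of $\mathcal{G}'$ that mix partial blocks. An alternative, if one preferred to avoid the expansion, would be to mimic the scalar completion proof directly at the block level, completing one missing block at a time along a clique tree via a Schur-complement / maximum-determinant argument; but the reduction above is cleaner because it reuses the known result verbatim.
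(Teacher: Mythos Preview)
The paper does not actually contain a proof of this theorem: it states ``Due to lack of space, the proofs are omitted here'' and only provides a footnote link to an external document. Consequently a direct comparison with the paper's own argument is not possible from the given source.

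That said, your proposed proof is correct and is the natural way to obtain the block version from the scalar one. The scalar expansion $\mathcal{G}'$ you describe is exactly the ordinary sparsity graph of an $\alpha$-partitioned matrix with block pattern $\mathcal{E}$, so $\mathbb{S}^N_{\alpha,+}(\mathcal{E},?)$ and $\mathbb{S}^N_{+}(\mathcal{E}',?)$ are literally the same set. Your perfect-elimination-ordering lift in step~(i) is sound: for a scalar vertex $u$ in block $v_k$, its higher-indexed neighbours consist of the remaining vertices of $v_k$ (pairwise adjacent by construction) together with all vertices of the later block-neighbours of $v_k$ (pairwise adjacent because those blocks form a clique of $\mathcal{G}$ by the PEO property, and each such block is completely joined to $v_k$); hence the lifted ordering is a PEO and $\mathcal{G}'$ is chordal. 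The maximal-clique correspondence in step~(ii) is also correct: any scalar vertex of a block has exactly the same neighbourhood as every other vertex of that block, so a maximal clique of $\mathcal{G}'$ must absorb each block it touches, and maximality at the block level then forces the touched blocks to be one of the $\mathcal{C}_k$. With these two facts, the classical Grone theorem applied to $\mathcal{G}'$ gives precisely the condition $E_{\mathcal{C}_k}XE_{\mathcal{C}_k}^T\succeq 0$ for all $k$.

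Your anticipated ``main obstacle'' is not really an obstacle: the three adjacency checks in (i) are each one line, and the maximality argument in (ii) follows immediately from the observation that vertices within a block are twins (identical closed neighbourhoods). The alternative block-level Schur-complement completion you mention would also work and is closer in spirit to direct proofs of the scalar theorem, but your reduction is cleaner and, given the paper's explicit remark that the $\alpha=\{1,\ldots,1\}$ case recovers the classical statement, is very likely the intended route as well.
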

    \begin{equation} \label{E:DecompositionCompletion}
        E_{\mathcal{C}_k} X E_{\mathcal{C}_k}^T \in \mathbb{S}^{\vert \mathcal{C}_k \vert}_+,
    \quad k=1,\,\ldots,\,p.
\end{equation}

\begin{theorem} [Generalized Agler's theorem]\label{T:ChordalDecompositionTheorem}
     Let $\mathcal{G}(\mathcal{V},\mathcal{E})$ be a chordal graph with a set of maximal cliques $\{\mathcal{C}_1,\mathcal{C}_2, \ldots, \mathcal{C}_p\}$. Given a partition $\alpha= \{\alpha_1,\alpha_2,\ldots,\alpha_n\}$ and $N = \sum_{i=1}^n \alpha_i$, then, $Z\in\mathbb{S}^N_{\alpha,+}(\mathcal{E},0)$ if and only if there exist matrices $Z_k \in \mathbb{S}^{\vert \mathcal{C}_k \vert}_+$ for $k=1,\,\ldots,\,p$ such that
\end{theorem}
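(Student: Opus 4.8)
The plan is to prove the two inclusions separately, reading the (displayed) decomposition $Z = \sum_{k=1}^p E_{\mathcal{C}_k}^T Z_k E_{\mathcal{C}_k}$ as a characterization of the cone $\mathbb{S}^N_{\alpha,+}(\mathcal{E},0)$. The sufficiency direction is routine: if each $Z_k \succeq 0$, then every inflated term $E_{\mathcal{C}_k}^T Z_k E_{\mathcal{C}_k}$ is itself PSD, and its only nonzero blocks lie in the rows and columns indexed by $\mathcal{C}_k$; since $\mathcal{C}_k$ induces a complete subgraph, each such block is permitted by the pattern $\mathcal{E}$. Summing over $k$ therefore produces a PSD matrix whose support is contained in $\bigcup_k (\mathcal{C}_k \times \mathcal{C}_k) = \mathcal{E}^*$, so $Z \in \mathbb{S}^N_{\alpha,+}(\mathcal{E},0)$.

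For the necessity direction (existence of the decomposition) I would use conic duality together with the already-established Generalized Grone's theorem, rather than attempt a direct completion argument. The key structural fact is that, within the ambient space $\mathbb{S}^N_{\alpha}(\mathcal{E},0)$ equipped with the trace inner product $\langle\cdot,\cdot\rangle$, the cones $\mathbb{S}^N_{\alpha,+}(\mathcal{E},0)$ and $\mathbb{S}^N_{\alpha,+}(\mathcal{E},?)$ are dual to one another. Indeed, since $\mathbb{S}^N_{\alpha,+}(\mathcal{E},?) = \mathbb{P}_{\mathbb{S}^N_{\alpha}(\mathcal{E},0)}(\mathbb{S}^N_+)$ is the projection of the self-dual PSD cone onto the sparse subspace, its dual cone taken inside that subspace is $\mathbb{S}^N_+ \cap \mathbb{S}^N_{\alpha}(\mathcal{E},0) = \mathbb{S}^N_{\alpha,+}(\mathcal{E},0)$.

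Next I would dualize the clique characterization supplied by Theorem~\ref{T:ChordalCompletionTheorem}, which writes $\mathbb{S}^N_{\alpha,+}(\mathcal{E},?)$ as the intersection $\bigcap_{k=1}^p K_k$, where $K_k = \{X \in \mathbb{S}^N_{\alpha}(\mathcal{E},0) : E_{\mathcal{C}_k} X E_{\mathcal{C}_k}^T \succeq 0\}$ is the preimage of $\mathbb{S}^{|\mathcal{C}_k|}_+$ under the map $X \mapsto E_{\mathcal{C}_k} X E_{\mathcal{C}_k}^T$. Because the adjoint of this map is the inflation $Z_k \mapsto E_{\mathcal{C}_k}^T Z_k E_{\mathcal{C}_k}$ (from $\langle E_{\mathcal{C}_k} X E_{\mathcal{C}_k}^T, Z_k\rangle = \langle X, E_{\mathcal{C}_k}^T Z_k E_{\mathcal{C}_k}\rangle$), the dual cone of $K_k$ is the image $\{E_{\mathcal{C}_k}^T Z_k E_{\mathcal{C}_k} : Z_k \in \mathbb{S}^{|\mathcal{C}_k|}_+\}$, and the dual of the intersection is the Minkowski sum of these images, i.e.\ precisely the decomposition cone on the right-hand side of the theorem. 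Combined with the duality of the previous paragraph, this yields $\mathbb{S}^N_{\alpha,+}(\mathcal{E},0) = \{\sum_k E_{\mathcal{C}_k}^T Z_k E_{\mathcal{C}_k} : Z_k \succeq 0\}$, which is the claim.

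The main obstacle is a closure/constraint-qualification subtlety: the dual of an intersection of cones equals the sum of the dual cones only up to closure, and Minkowski sums of closed convex cones need not be closed in general. I would discharge this either by verifying a Slater-type condition (the cones $K_k$ share a common interior point, since any positive-definite matrix $X$ in the subspace gives $E_{\mathcal{C}_k} X E_{\mathcal{C}_k}^T \succ 0$ for every $k$), which makes the sum closed and the duality exact; or, avoiding the functional-analytic point altogether, by a constructive induction over a clique tree of $\mathcal{G}$ satisfying the running-intersection property, peeling off one leaf clique at a time via a Schur-complement step that subtracts an explicit PSD term $E_{\mathcal{C}_k}^T Z_k E_{\mathcal{C}_k}$ while preserving both positive semidefiniteness and the reduced chordal pattern of the remainder. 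Throughout, the only genuinely \emph{block} aspect is the bookkeeping with the index matrices $E_{\mathcal{C}_k}$ in place of scalar selectors; the underlying chordal-graph arguments are unchanged.
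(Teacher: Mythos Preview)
The paper itself omits the proof of Theorem~\ref{T:ChordalDecompositionTheorem}, deferring to an external supplement, so there is no in-paper argument to compare against directly. Assessing your proposal on its own merits: it is correct. The sufficiency direction is exactly as you say. For necessity, your duality route is the standard and clean one: the cones $\mathbb{S}^N_{\alpha,+}(\mathcal{E},0)$ and $\mathbb{S}^N_{\alpha,+}(\mathcal{E},?)$ are indeed mutually dual in $\mathbb{S}^N_{\alpha}(\mathcal{E},0)$, the adjoint of extraction is inflation, and your computation of $K_k^*$ is right (surjectivity of the extraction map onto $\mathbb{S}^{|\mathcal{C}_k|}$ uses that $\mathcal{C}_k$ is a clique, so all those blocks are free in the pattern). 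The closure issue you flag is genuine, and the Slater fix via the identity matrix is the correct discharge; the clique-tree Schur-complement alternative you sketch is the other standard proof and also works.

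A natural alternative---and plausibly what the authors' supplement does, given their emphasis on ``block matrices with arbitrary partition''---is to reduce to the scalar Agler theorem: replace each node $i$ by a complete block of $\alpha_i$ scalar vertices and connect scalar vertices across blocks iff their parents are adjacent in $\mathcal{G}$. One checks this refined graph on $N$ vertices is chordal iff $\mathcal{G}$ is, with maximal cliques $\bigcup_{i \in \mathcal{C}_k}\{\text{scalar vertices of }i\}$ in bijection with the $\mathcal{C}_k$, and then the scalar result of Agler et al.\ applies verbatim. Your approach has the advantage of being self-contained modulo Theorem~\ref{T:ChordalCompletionTheorem}; the reduction approach proves both block theorems at once from their scalar antecedents with essentially no new analysis.
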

    \begin{equation} \label{E:DecompositionSparseCone}
    Z = \sum_{k=1}^{p} E_{\mathcal{C}_k}^T Z_k E_{\mathcal{C}_k}.
    \end{equation}

For the scalar case, \emph{i.e.}, $\alpha = \{1,1,\ldots,1\}$, Theorems~\ref{T:ChordalCompletionTheorem} and~\ref{T:ChordalDecompositionTheorem}  reduce to the Grone's~\cite{grone1984positive} and Agler's~\cite{agler1988positive} theorems, respectively. Here, we show that these two celebrated theorems can be generalized into sparse block matrices with an arbitrary partition. Due to lack of space, the proofs are omitted here\footnote{See proofs via \scriptsize \url{http://sysos.eng.ox.ac.uk/wiki/images/7/7c/ECC2018.pdf}}.
One direct benefit of the generalized Grone's and Agler's theorems is the convenience for networked system analysis, \emph{i.e.}, they allow us to solely focus on the physical connections characterized by $\mathcal{G}$ to exploit scalability. In this paper, \emph{chordal decomposition} refers to the application of Theorems~\ref{T:ChordalCompletionTheorem} and~\ref{T:ChordalDecompositionTheorem} to decompose a large sparse block PSD cone into a set of smaller but coupled PSD cones. In the next section, we summarize a recent first-order algorithm~\cite{ZFPGW2017chordal} that can exploit Theorems~\ref{T:ChordalCompletionTheorem} and~\ref{T:ChordalDecompositionTheorem}.

\subsection{Chordal decomposition in first-order methods} \label{Section:FOM}

Suppose the data matrices in~\eqref{E:SDPprimal} and \eqref{E:SDPdual} have an aggregate sparsity pattern:
$A_0, A_1, \ldots, A_m \in \mathbb{S}^N_{\alpha}(\mathcal{E},0).$ It is assumed that the pattern $\mathcal{E}$ is chordal (otherwise, a suitable chordal extension can be found; as stated in Remark~\ref{R:BlockSparseMatrix}, making an extension does not affect the problem data), with a set of maximal cliques $\mathcal{C}_1, \mathcal{C}_2, \ldots, \mathcal{C}_p$. Note that the cost function and equality constraints in~\eqref{E:SDPprimal} only depend on the entries $X_{ij}$ on its diagonal and $(i,j) \in \mathcal{E}$. The remaining elements simply guarantee that the matrix is PSD. Also, in~\eqref{E:SDPdual} any feasible solution $Z$ satisfies the sparsity pattern $\mathbb{S}^N_{\alpha}(\mathcal{E},0)$.  Recalling the definition of $\mathbb{S}^N_{\alpha,+}(\mathcal{E},?)$ and $\mathbb{S}^N_{\alpha,+}(\mathcal{E},0)$, and according to Theorems~\ref{T:ChordalCompletionTheorem} and~\ref{T:ChordalDecompositionTheorem}, we can equivalently reformulate the primal SDP~\eqref{E:SDPprimal} and the dual SDP~\eqref{E:SDPdual}, respectively, as
\begin{equation} \label{E:SDPprimalFirstOrder}
    \begin{aligned}
    \min_{X,X_k} \quad & \langle A_0,X \rangle \\
    \text{subject to} \quad & \langle A_i,X \rangle = b_i, & i = 1, \ldots, m, \\
    & X_k = E_{\mathcal{C}_k} XE_{\mathcal{C}_k}^T, & k = 1, \ldots, p, \\
    & X_k \in  \mathbb{S}^{\vert \mathcal{C}_k \vert}_+, &k=1,\,\ldots,\,p,
    \end{aligned}
\end{equation}
and
\begin{equation} \label{E:SDPdualFirstOrder}
    \begin{aligned}
    \max_{y,Z_k,V_k} \quad & \langle b,y \rangle \\
    \text{subject to} \quad  & \sum_{k=1}^p E_{\mathcal{C}_k}^T V_kE_{\mathcal{C}_k} + \sum_{i=1}^m y_iA_i = A_0, \\
    & Z_k = V_k, \;\;\,k = 1, \ldots, p, \\
    & Z_k \in  \mathbb{S}^{\vert \mathcal{C}_k \vert}_+, k=1,\,\ldots,\,p.
    \end{aligned}
\end{equation}
In~\eqref{E:SDPprimalFirstOrder} and~\eqref{E:SDPdualFirstOrder}, the original single large PSD cone has been replaced by multiple smaller PSD cones, coupled by a set of consensus variables. Then, first-order methods can be applied to the decomposed formulations~\eqref{E:SDPprimalFirstOrder} and~\eqref{E:SDPdualFirstOrder}~\cite{ZFPGW2016}, or their homogeneous self-dual embedding~\cite{ZFPGW2017chordal}, which lead to algorithms only involving parallel PSD projections onto $p$ smaller cones and a projection onto an affine set at each iteration. If the size of the largest maximal clique is small, then the reduction of cone dimensions enables us to compute PSD projections much more efficiently. Consequently, the application of first-order methods in the decomposed problems~\eqref{E:SDPprimalFirstOrder} and~\eqref{E:SDPdualFirstOrder} improves the scalability to solve sparse SDPs when seeking a solution of moderate accuracy. The interested reader is referred to~\cite{ZFPGW2016, ZFPGW2017chordal} for details. The MATLAB package CDCS~\cite{CDCS} provides an efficient implementation of the above decomposition method.

\begin{remark}
    Note that chordal sparsity has been identified as a key structure in large-scale sparse SDPs. In addition to the decomposition approach~\cite{ZFPGW2017chordal}, there are several other methods to exploit chordal properties (Theorems~\ref{T:ChordalCompletionTheorem} and~\ref{T:ChordalDecompositionTheorem}) in solving sparse SDPs, \emph{e.g.},~\cite{fukuda2001exploiting, andersen2010implementation}. In this paper, we use the decomposition approach~\cite{ZFPGW2017chordal} as an example to show the computational benefits brought by exploiting chordal sparsity in the analysis of large-scale networked systems. 
\end{remark}

\section{Scalable Performance Analysis of Sparse Systems} \label{Section:ScalableAnalysis}

This section applies the chordal decomposition techniques in stability, $\mathcal{H}_2$ and $\mathcal{H}_{\infty}$ analyses of linear networked systems. Our strategy is to restrict the sparsity pattern of $P$, such that the sparsity structure of the dynamical system is preserved in the SDP formulations of~\eqref{E:StabilityCentralized}-\eqref{E:HinfCentralized}. This allows one to decompose a single large PSD constraint into multiple smaller ones using chordal decomposition, thus facilitating their solutions using sparse optimization techniques~\cite{CDCS}. We note that the proposed method of this section may introduce certain conservatism for general networked systems since we use a sparse Lyapunov function.

\subsection{Stability verification} \label{subSection:stability}

We first show that the sparsity pattern of $A^TP+PA$ reflects the aggregate sparsity pattern of the resulting SDP formulation. Let us write the Lyapunov LMI as
\begin{equation}~\label{E:LyapLMI}
    \begin{bmatrix} -P & 0 \\0 & A^TP+PA\end{bmatrix} \prec 0.
\end{equation}
There are up to $m = \frac{N(N+1)}{2}$ free variables in matrix $P$. We denote $W_1, W_2, \ldots, W_m$ as the standard basis matrices for $\mathbb{S}^N$, and define the matrices $A_1, A_2, \ldots, A_m \in \mathbb{S}^{2N}$ as
\begin{equation} \label{E:StabilityBasis}
    A_i = \begin{bmatrix} - W_i & 0 \\0 & A^TW_i + W_iA \end{bmatrix}, i = 1, 2, \ldots, m.
\end{equation}
Then,~\eqref{E:LyapLMI} can be reformulated into a standard dual SDP
    \begin{equation} \label{E:SDPdualStability}
    \begin{aligned}
    \max_{y,Z} \quad & \langle b,y \rangle \\
    \text{subject to} \quad & Z+ \sum_{i=1}^m y_iA_i = A_0, \\
    & Z \succeq 0,
    \end{aligned}
\end{equation}
where $y \in \mathbb{R}^m, Z \in \mathbb{S}^{2N}_+$, $A_0= -\epsilon I, \epsilon>0, b = 0$, and $A_i$ is defined in~\eqref{E:StabilityBasis}. At this point, we know that the aggregate sparsity pattern of~\eqref{E:SDPdualStability} is
$$
    \mathcal{P}(A_0) \cup \mathcal{P}(A_1) \cup \cdots \cup \mathcal{P}(A_m) = \mathcal{P}\left(\begin{bmatrix} -P & 0 \\0 & A^TP+PA\end{bmatrix}\right),
$$
where $\mathcal{P}(\cdot)$ denotes the sparsity pattern of a matrix. The aggregate sparsity pattern of~\eqref{E:SDPdualStability} directly depends on the sparsity patterns of $P$ and $A^TP + PA$.

For a networked system~\eqref{E:OverallDyanmics}, matrix $A$ has an inherent structure described by $\mathcal{G}(\mathcal{V},\mathcal{E})$, \emph{i.e.}, $A \in \mathbb{R}_{\alpha}^{N \times N}(\mathcal{E},0)$, where $\alpha = \{\alpha_1, \alpha_2, \ldots, \alpha_n\}$ denotes the dimensions of local states. Apparently, a dense $P$ has no conservatism in certifying stability, but leads to a full pattern of $A^TP+PA$. To preserve the sparsity structure $\mathcal{G}$, we consider the following problem.
\begin{description}
  \item[Q$^*$] Given a sparse $A \in \mathbb{R}^{N \times N}_{\alpha}(\mathcal{E},0)$, find a sparsity pattern of $P$, such that the sparsity pattern of $A^TP+PA$ inherits the original pattern $\mathcal{E}$ (more favorably, the resulting pattern is chordal with small maximal cliques).
\end{description}

We note that a complete answer to Q$^*$ is difficult for general systems, especially considering the relationship between sparsity (\emph{i.e.}, efficiency) and conservativeness. One trivial choice is a block-diagonal $P$ with block sizes compatible with the subsystem sizes $\alpha_i$. Then, the graph structure in the dynamical system~\eqref{E:SubsystemDynamics} is naturally inherited in~\eqref{E:SDPdualStability}, \emph{i.e.},
\begin{equation}
    A^TP+PA \in \mathbb{S}^{N}_{\alpha}(\mathcal{E}\cup\mathcal{E}_r,0),
\end{equation}
where $\mathcal{E}_r$ denotes a set of reverse edges obtained by reversing the order of the pairs in $\mathcal{E}$. We note that the existence of block-diagonal $P$ is investigated in~\cite{sootla2017block}, and diagonal $P$ is necessary and sufficient to certify stability of positive systems~\cite{rantzer2015scalable}. Other choices of $P$ are available for special graphs $\mathcal{G}$, such as trees and cycles~\cite{mason2014chordal}.

In this paper, we assume the pattern of $A^TP+PA$ can be described by a chordal graph $\mathcal{G}_c(\mathcal{V},\mathcal{E}_c)$ with a set of maximal cliques $\mathcal{C}_1, \ldots, \mathcal{C}_p$. As mentioned above, one basic choice is a block-diagonal $P$. If $\mathcal{E}\cup\mathcal{E}_r$ is non-chordal, then we can make a chordal extension to get $\mathcal{E}_c$. In~\eqref{E:SDPdualStability}, the single large PSD cone $Z \succeq 0$ has two blocks, where the upper-left one corresponds to block-diagonal $P$ and the bottom-right one can be replaced by $\mathbb{S}^N_{\alpha,+}(\mathcal{E}_c,0)$. Then, $\mathbb{S}^N_{\alpha,+}(\mathcal{E}_c,0)$ can be subsequently decomposed into multiple smaller cones using chordal decomposition (see Theorem 1 and the reformulations~\eqref{E:SDPprimalFirstOrder} and~\eqref{E:SDPdualFirstOrder}). Consequently, if the largest maximal clique is small, the SDP formulation~\eqref{E:SDPdualStability} can be expected to be solved efficiently for sparse systems using the first-order method described in Section~\ref{Section:FOM}.

\subsection{$\mathcal{H}_2$ performance}

Similar to the stability analysis, the $\mathcal{H}_2$ optimization problem \eqref{E:H2Centralized} can be reformulated into a standard SDP of primal form~\eqref{E:SDPprimal} or dual form~\eqref{E:SDPdual}. The aggregate sparsity pattern of the resulting SDP is determined by the pattern of
$
    A^TP+PA + C^TC.
$
Considering the structure of networked system~\eqref{E:SubsystemDynamics}, we have
\begin{equation}
    \mathcal{P}(A^TP+PA + C^TC) = \mathcal{P}(A^TP+PA).
\end{equation}
Then, the argument for stability analysis can be applied to $\mathcal{H}_2$ analysis for the purpose of scalable computation. We assume the pattern of $A^TP+PA$ can be described by a chordal graph $\mathcal{G}_c(\mathcal{V},\mathcal{E}_c)$, leading to
$
    A^TP+PA + C^TC \in \mathbb{S}^{N}_{\alpha}(\mathcal{E}_c,0).
$
Consequently, the sparse optimization technique~\cite{CDCS} is ready to solve the decomposed version of~\eqref{E:H2Centralized}. Note that we can only obtain an approximated (upper bound) $\mathcal{H}_2$ performance in general due to using a sparse $P$.

\subsection{$\mathcal{H}_{\infty}$ performance}

When reformulating the $\mathcal{H}_{\infty}$ analysis problem~\eqref{E:HinfCentralized} into a standard SDP, the aggregate sparsity pattern depends on the pattern of the following matrix
\begin{equation} \label{E:PatternHinf}
    M = \begin{bmatrix} A^TP+PA & PB & C^T \\B^TP & -\gamma I & D^T \\ C & D& -\gamma I \end{bmatrix}.
\end{equation}
According to the inherent structure in~\eqref{E:SubsystemDynamics}, we know $A \in \mathbb{R}_{\alpha}^{N \times N}(\mathcal{E},0)$ and $B,C,D$ are block-diagonal. If we restrict $P$ to be block-diagonal with compatible block sizes, then the entry $PB$ is also block-diagonal. The sparsity pattern of the first block on the diagonal is
$
    A^TP+PA \in \mathbb{S}^{N}_{\alpha}(\mathcal{E}_c,0),
$
where $\mathcal{E}_c$ is the chordal extension of $\mathcal{E}\cup\mathcal{E}_r$, defined in Section~\ref{subSection:stability}. Then, we have the following result.

\begin{theorem} \label{T:HinfPattern}
    Consider a networked system with dynamics~\eqref{E:OverallDyanmics} and a block-diagonal $P$. Suppose that the sparsity pattern of $A^TP+PA$ has $p$ maximal cliques $\mathcal{C}_1,\mathcal{C}_2,\ldots,\mathcal{C}_p$, and the cardinality of the largest maximal clique is $h$. Then,
    \begin{enumerate}
      \item the block matrix $M$ in~\eqref{E:PatternHinf} has a partition $\hat{\alpha} = \{\alpha_1,\alpha_2,\ldots,\alpha_n,m_1,m_2,\ldots,m_n,d_1,d_2,\ldots,d_n\}$;
      \item the sparsity pattern of $M$, denoted as $M \in \mathbb{S}^{\hat{N}}_{\hat{\alpha}}(\hat{\mathcal{E}},0)$, has $p+n$ maximal cliques, and the cardinality of the largest maximal clique is $\max\{h,3\}$.
    \end{enumerate}
\end{theorem}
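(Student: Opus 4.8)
The plan is to prove both claims by a direct structural analysis of the block sparsity graph of $M$, which I will denote $\hat{\mathcal{G}}(\hat{\mathcal{V}},\hat{\mathcal{E}})$. Part (1) is immediate from the block dimensions: the $(1,1)$ block $A^TP+PA$ is $N\times N$ and carries the state partition $\{\alpha_1,\ldots,\alpha_n\}$; the middle diagonal block $-\gamma I$ acts on the stacked disturbance and carries $\{m_1,\ldots,m_n\}$; and the bottom-right $-\gamma I$ acts on the stacked output and carries $\{d_1,\ldots,d_n\}$. Since $P$, $B$, $C$, $D$ are all block-diagonal with compatible sizes, every coupling block ($PB$, $C^T$, $D^T$ and their transposes) is itself block-diagonal in this partition, so $M$ is genuinely $\hat{\alpha}$-partitioned with $3n$ blocks, where $\hat{\alpha}=\{\alpha_1,\ldots,\alpha_n,m_1,\ldots,m_n,d_1,\ldots,d_n\}$.

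For part (2), I would first read off the edge set $\hat{\mathcal{E}}$. Labelling the $3n$ nodes as state nodes $s_1,\ldots,s_n$, disturbance nodes $w_1,\ldots,w_n$ and output nodes $y_1,\ldots,y_n$, the block-diagonality of $PB$, $C^T$, $D^T$ forces the only state--disturbance, state--output and disturbance--output couplings to be the \emph{intra}-subsystem edges $s_i\sim w_i$, $s_i\sim y_i$ and $w_i\sim y_i$, while the diagonal blocks $-\gamma I$ contribute no edges among distinct $w$'s or among distinct $y$'s. The state--state edges are precisely those of the chordal pattern $\mathcal{E}_c$ of $A^TP+PA$, whose maximal cliques are $\mathcal{C}_1,\ldots,\mathcal{C}_p$ of maximum cardinality $h$. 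Thus $\hat{\mathcal{G}}$ is the chordal state graph together with, for each subsystem $i$, a triangle on $\{s_i,w_i,y_i\}$ attached at the single state node $s_i$.

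The core of the proof is the clique enumeration. I would observe that each $w_i$ and each $y_i$ has neighbours only inside $\{s_i,w_i,y_i\}$, so any clique meeting a disturbance or output node lies in a single triangle $\{s_i,w_i,y_i\}$; this triangle is maximal because no $s_j$, $w_j$ or $y_j$ with $j\neq i$ is adjacent to both $w_i$ and $y_i$, giving $n$ such cliques. Conversely, any clique contained entirely in the state nodes is a clique of the state graph, and because $w_j,y_j$ attach only to $s_j$ they cannot enlarge a state clique of two or more nodes; hence each $\mathcal{C}_k$ survives as a maximal clique, giving the remaining $p$. This yields exactly $p+n$ maximal cliques, and the largest has cardinality $\max\{h,3\}$ since the state cliques have at most $h$ nodes and each triangle has $3$. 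To justify that the generalized Grone's and Agler's theorems (Theorems~\ref{T:ChordalCompletionTheorem} and~\ref{T:ChordalDecompositionTheorem}) apply downstream, I would also confirm that $\hat{\mathcal{G}}$ is chordal by adding, for each $i$, the vertex $w_i$ onto the clique $\{s_i\}$ and then $y_i$ onto the clique $\{s_i,w_i\}$; each is a simplicial vertex glued to an existing clique of a chordal graph, an operation that preserves chordality.

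The main obstacle I expect is the completeness of the clique enumeration together with one degenerate case. If a state clique $\mathcal{C}_k$ is a singleton $\{s_i\}$ (i.e.\ $h=1$, a decoupled state pattern), it is absorbed into the triangle $\{s_i,w_i,y_i\}$ and the count $p+n$ fails; the clean statement therefore rests on the standing assumption of genuine coupling, $h\ge 2$ (equivalently, each subsystem lying in a state clique of at least two nodes), which is exactly where the bound $\max\{h,3\}$ rather than $h$ originates. Keeping the maximality and non-absorption bookkeeping straight across these cases, and verifying that adding the $w_i,y_i$ appendages neither merges nor splits any state clique, is the step requiring the most care; the remaining reasoning is routine graph bookkeeping on the block pattern described above.
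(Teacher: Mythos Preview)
Your proposal is correct and follows essentially the same structural argument as the paper: identify the edge set $\hat{\mathcal{E}}$ from the block-diagonality of $PB$, $C$, $D$ and the state pattern $\mathcal{E}_c$, then read off the maximal cliques as the $p$ state cliques $\mathcal{C}_1,\ldots,\mathcal{C}_p$ together with the $n$ triangles $\{s_i,w_i,y_i\}$. Your treatment is in fact more careful than the paper's, which asserts the clique list without justifying maximality, omits the chordality check, and does not flag the degenerate singleton-clique case you identify.
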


\begin{proof}
    According to~\eqref{E:SubsystemDynamics}, it is straightforward to see that the block matrix $M$ in~\eqref{E:PatternHinf} has a partition $\hat{\alpha} = \{\alpha_1,\alpha_2,\ldots,\alpha_n,m_1,m_2,\ldots,m_n,d_1,d_2,\ldots,d_n\}$, where $\alpha_i,m_i,d_i (i = 1, \ldots, n)$ are the dimensions of local states, disturbances and outputs, respectively.

    Let us first consider the following block
    \begin{equation} \label{E:SubmatrixM}
        M_1 = \begin{bmatrix} A^TP+PA & PB \\B^TP & -\gamma I \end{bmatrix} \in \mathbb{S}^{\hat{N}_1}_{\hat{\alpha}_1}(\hat{\mathcal{E}}_1,0),
    \end{equation}
    where the partition $\hat{\alpha}_1 = \{\alpha_1,\alpha_2,\ldots,\alpha_n,m_1,m_2,\ldots,m_n\}$ and $\hat{N}_1 = \sum_{i=1}^n(\alpha_i+m_i)$. Since $PB$ and $\gamma I$ are block diagonal, every node $i \in \{n+1,\ldots, n+n\}$ is only connected to one node $i-n$. Then, the edge set for $M_1$ is shown as
    \begin{equation} \label{E:HinfEdge1}
        \hat{\mathcal{E}}_1 = {\mathcal{E}}_c \bigcup \left\{(i,i+n)\mid i = 1, \ldots, n\right\},
    \end{equation}
    indicating that the maximal cliques of $\hat{\mathcal{E}}_1$ are given by
    \begin{equation}
        \mathcal{C}_1,\ldots ,\mathcal{C}_p, \mathcal{C}_{p+i} = \{i,i+n\}, i = 1, \ldots, n.
    \end{equation}

    Next, according to~\eqref{E:PatternHinf} and \eqref{E:SubmatrixM}, we know
    \begin{equation}
        M = \begin{bmatrix} M_1 & H \\H^T & -\gamma I \end{bmatrix} \in \mathbb{S}^{\hat{N}}_{\hat{\alpha}}(\hat{\mathcal{E}},0),
    \end{equation}
    where $H^T = \begin{bmatrix}C&D\end{bmatrix}$ and $\hat{N} = \sum_{i=1}^n(\alpha_i+m_i+d_i)$. Since the matrices $C, D$ and $\gamma I$ are block diagonal, every node $i \in \{2n+1,\ldots, 2n+n\}$ is connected to another two nodes $i-n, i-2n$. Consequently, the edge set for $M$ is given by
    \begin{equation*}
        \hat{\mathcal{E}} = \hat{\mathcal{E}}_1 \bigcup \left\{(i,i+2n),(i+n,i+2n)\mid i = 1, \ldots, n\right\}.
    \end{equation*}
    According to the edge set $\hat{\mathcal{E}}_1$~\eqref{E:HinfEdge1}, we know that in the edge set $\hat{\mathcal{E}}$, $\{i,i+n,i+2n\}$ forms a maximal clique. This implies that the maximal cliques of $\hat{\mathcal{E}}$ are
    \begin{equation}
        \mathcal{C}_1,\ldots ,\mathcal{C}_p, \mathcal{C}_{p+i} = \{i,i+n,i+2n\}, i = 1, \ldots, n.
    \end{equation}

    Therefore, $\mathbb{S}^{\hat{N}}_{\hat{\alpha}}(\hat{\mathcal{E}},0)$ has $p+n$ maximal cliques, and the cardinality of the largest maximal clique is $\max\{h,3\}$.
\end{proof}

Although $\mathcal{H}_{\infty}$ analysis problem~\eqref{E:HinfCentralized} appears to be more complex than the Lyapunov LMI~\eqref{E:StabilityCentralized}, Theorem~\ref{T:HinfPattern} shows that the underlying maximal cliques are similar and that the cardinality of the largest maximal clique for~\eqref{E:HinfCentralized} and~\eqref{E:StabilityCentralized} is almost identical. Therefore, the strategy for stability analysis can be applied to $\mathcal{H}_{\infty}$ analysis problem~\eqref{E:HinfCentralized}: the single large PSD cone can be decomposed into $p+n$ smaller ones, and the sparse optimization technique~\cite{CDCS} can be used to solve the decomposed problem in a scalable fashion. 

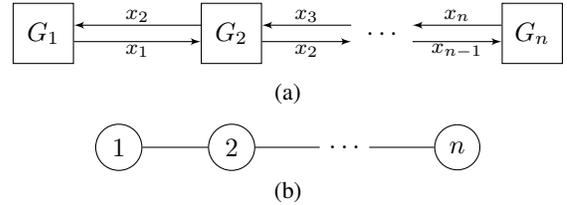
\begin{figure}[t]
    \centering
    \setlength{\abovecaptionskip}{0em}
    \setlength{\belowcaptionskip}{0em}
    \begin{tikzpicture}[every node/.style={minimum width=0.8cm, minimum height=0.8cm,text centered,align=center}]
        \node[draw,rectangle] (B) at (0,0) {$G_1$};
        \node[draw,rectangle] (C) at (2.5,0) {$G_2$};
        \node (D) at (4.5,0) {$\ldots$};
        \node[draw,rectangle] (E) at (6.5,0) {$G_n$};
        \draw[-latex'] (B.345) -- node[rotate=0,below=-0.8em,font=\footnotesize] {$x_1$} (C.195);
        \draw[latex'-] (B.15) -- node[rotate=0,above=-0.8em,font=\footnotesize] {$x_2$} (C.165);
        \draw[-latex'] (C.345) -- node[below=-0.8em,font=\footnotesize] {$x_2$}(D.195);
        \draw[latex'-] (C.15) -- node[rotate=0,above=-0.8em,font=\footnotesize] {$x_3$} (D.165);
        \draw[-latex'] (D.345) -- node[rotate=0,below=-0.8em,font=\footnotesize] {$x_{n-1}$} (E.195);
        \draw[latex'-] (D.15) -- node[rotate=0,above=-0.8em,font=\footnotesize] {$x_{n}$} (E.165);
        \node at (3.25,-0.8) {\small (a)};
    \end{tikzpicture}
    \begin{tikzpicture}[every node/.style={minimum width=0.6cm, minimum height=0.6cm,text centered,align=center}]
        \node[draw,circle] (B) at (0,0) {$1$};
        \node[draw,circle] (C) at (1.5,0) {$2$};
        \node (D) at (3,0) {$\ldots$};
        \node[draw,circle] (E) at (4.5,0) {$n$};
        \draw (B) --  (C);
        \draw (C) -- (D);
        \draw (D) -- (E);
        \node at (2.25,-0.6) {\small (b)};
    \end{tikzpicture}
    \caption{ (a) A chain of $n$ subsystems; (b) a simplified line graph.}  \label{F:Chains}
\end{figure}

\begin{figure}[t]
  \centering
    \setlength{\belowcaptionskip}{-8pt}
  \includegraphics[scale = 0.76,angle =0]{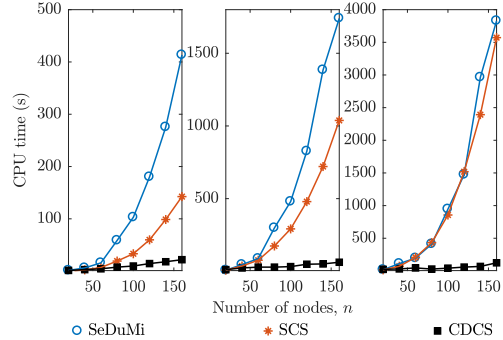}\\
  \caption{CPU time (s) required by SeDuMi, SCS and CDCS to solve the SDPs of the analysis problems~\eqref{E:StabilityCentralized}-\eqref{E:HinfCentralized} of a chain of subsystems.}
  \label{F:CPUlinegraph}
\end{figure}

\section{Numerical Simulations} \label{Section:Simulation}

To show the efficiency of the chordal decomposition approach, we consider a chain of $n$ subsystems where each subsystem has physical interactions with its two neighbouring ones except the first and last subsystem, which only interacts with one neighbouring subsystem; see Fig.~\ref{F:Chains}~(a) for illustration. A simplified version of this chain is shown in Fig.~\ref{F:Chains}~(b). In this case, the maximal cliques of the graph in Fig.~\ref{F:Chains}~(b) are $\{i,i+1\}, i = 1, \ldots, n-1$, and the cardinality of the largest maximal clique is only 2. 

\begin{table}[t]
    \centering
    \scriptsize
    \setlength{\abovecaptionskip}{0em}
    \renewcommand\arraystretch{0.9}
    \caption{Approximated $\mathcal{H}_2$ and $\mathcal{H}_{\infty}$ performance of a chain of subsystems computed by different solvers.}
    \label{T:CostChain}
    \begin{tabular}{ c C{4mm} C{5mm} C{5mm} C{5mm} C{0.4mm} C{4mm} C{5mm} C{5mm} c}
        \hline \toprule[1pt] 
        & \multicolumn{4}{c}{$\mathcal{H}_2$} & & \multicolumn{4}{c}{$\mathcal{H}_{\infty}$}  \\
          \cline{2-5} \cline{7-10} \\[-0.75em]
        $n$  & $\dagger$ & sedumi &  SCS  & CDCS &    & $\ddagger$ & sedumi  & SCS  & CDCS \\
         \cline{2-5} \cline{7-10}\\[-0.5em]
        $20$  &~9.70& 17.73 &   17.73 &    17.73  &  &3.65& 3.66 &  3.70&    3.66\\
        $40$  &11.66& 20.07 &   20.07  &   20.07 &  &3.67& 3.68 &  3.74&       3.68\\
        $60$  &14.72& 25.78 &   25.79 &     25.78  &&3.75&  3.77&    3.85&       3.77\\
        $80$  &16.85& 28.70 &   28.71  &  28.69 &&4.32& 4.34  &  4.37 & 4.34\\
        $100$ &18.08&  29.88 &   29.91 &     29.88 &&3.91& 3.92&    3.96&       3.92\\
        $120$ &19.71& 32.10 &   32.12  &  32.09 &&4.02&4.03&    4.10  &   4.04 \\
        $140$ &21.51&  35.59 &   35.64 &     35.58 &&4.09& 4.10&    4.16&       4.11\\
        $160$ &24.64& 40.65  & 40.73     &40.65 & &4.07& 4.08 & 4.18&  4.09\\
        \bottomrule[1pt]
        \end{tabular}
        \scriptsize
\raggedright
$\dagger$: Accurate $\mathcal{H}_2$ performance returned by the MATLAB routine {\tt  norm(sys,2)}.\newline
$\ddagger$: Accurate $\mathcal{H}_{\infty}$ performance returned by the MATLAB routine {\tt  norm(sys,inf)}.\newline
\vspace{-5 mm}
\end{table}

We solved the SDP formulations of stability analysis~\eqref{E:StabilityCentralized}, $\mathcal{H}_2$ performance~\eqref{E:H2Centralized}, and $\mathcal{H}_{\infty}$ performance~\eqref{E:HinfCentralized} using standard dense solvers: SeDuMi~\cite{sturm1999using} and SCS~\cite{scs}, as well as using the sparse conic solver CDCS~\cite{CDCS} that exploits chordal sparsity. Block-diagonal $P$ was used in the formulations. 
For the interior-point solver SeDuMi, we used its default parameters, and the first-order solvers SCS and CDCS were called with termination tolerance $10^{-4}$ and number of iterations limited to $2000$. All simulations were run on a PC with a 2.8 GHz Intel Core i7 CPU and 8GB of RAM. In the simulations, the state dimension $n_i$ was chosen randomly from 5 to 10, and the dimensions of output and disturbance $(d_i, m_i)$ were chosen randomly from 1 to 5. Then, we generated random matrices $A_{ii}, A_{ij}, B_i, D_i$ and imposed the global state matrix $A$ with negative eigenvalues by setting $A := A - (\lambda_{\max}+5)I$, where $\lambda_{\max}$ denotes the maximum real part of the eigenvalues of $A$.

Fig.~\ref{F:CPUlinegraph} shows the CPU time in seconds required by the solvers for testing stability, and computing approximated $\mathcal{H}_2$ and $\mathcal{H}_{\infty}$ performance. The chordal decomposition approach (via CDCS) took significantly less time than standard dense methods (using either SeDuMi or SCS). Moreover, the CPU time required by CDCS seems to grow linearly as the system size increases. This is expected since the size of the largest maximal clique is fixed for a line graph, indicating that the size of the PSD cones after decomposition is fixed and only the number of PSD cones increases linearly as growth of the graph size.
Finally, Table~\ref{T:CostChain} lists the $\mathcal{H}_2$ and $\mathcal{H}_{\infty}$ performance computed by different solvers. We can see that using block-diagonal $P$ indeed brought certain conservatism when searching for performance bounds.

\balance
\section{Conclusion} \label{Section:Conclusion}

In this paper, we have introduced scalable analysis techniques for sparse linear networked systems by exploiting chordal decomposition and using a recent first-order algorithm. The numerical results have shown that when the largest maximal clique is small, the chordal decomposition approach is significantly faster than the standard dense method. This makes it a promising approach for large sparse systems analysis. Future work will consider non-block diagonal Lyapunov functions that can preserve the sparsity pattern in the analysis problems. 
Also, there are several interior-point methods that are able to exploit chordal properties in solving sparse SDPs~\cite{fukuda2001exploiting, andersen2010implementation}. It will be interesting to apply these solvers in sparse systems analysis and synthesis as well.

\bibliographystyle{IEEEtran}
\bibliography{Reference}

\end{document}